\providecommand{\U}[1]{\protect\rule{.1in}{.1in}}
\newtheorem{theorem}{Theorem}[section]
\newtheorem{corollary}[theorem]{Corollary}
\newtheorem{remark}[theorem]{Remark}
\newenvironment{proof}[1][Proof]{\noindent\textbf{#1.} }{\ \rule{0.5em}{0.5em}}
\begin{document}

\title{Revisiting the mathematical synthesis of the laws of Kepler and Galileo
leading to Newton's law of universal gravitation }
\author{Wu-Yi Hsiang\\Department of Mathematics\\University of California, Berkeley
\and Eldar Straume\\Department of Mathematical Sciences\\Norwegian University of Science and \\Technology, Trondheim, Norway}
\maketitle

\begin{abstract}
Newton's deduction of the inverse square law from Kepler's ellipse and area
laws together with his \textquotedblleft superb theorem\textquotedblright\ on
the gravitation attraction of spherically symmetric bodies, are the major
steps leading to the discovery of the law of universal gravitation (Principia,
1687). The goal of this article is to revisit some "well-known" events in the
history of science, and moreover, to provide elementary and clean-cut proofs,
still in the spirit of Newton, of these major advances. Being accessible to
first year university students, the educational aspect of such a coherent
presentation should not be overlooked.

\end{abstract}
\tableofcontents

\section{Introduction}

Towards the later half of the 16th century, the Renaissance of Greek
civilization in Europe had paved the way for major advancements leading to the
creation of modern science. In astronomy, the heliocentric theory of
Copernicus and the systematic astronomical observational data of Tycho de
Brahe led to the discovery of the magnificent empirical laws of Kepler on
planetary motions. In physics, the empirical laws of Galileo on terrestrial
gravity (i.e. free falling bodies and motions on inclined planes) laid the
modern foundation of mechanics.\ Then, in the later half of the 17th century,
the mathematical synthesis of those empirical laws led Isaac Newton
(1643--1727) to the discovery of the law of universal gravitation, published
in his monumental treatise \emph{Philosophiae Naturalis Principia
Mathematicae}\textit{\ (1687), }divided into three books, often referred to
as\textit{ }\emph{Principia }for brevity.

Note that, at this junction of scientific\ advancements, it is the
mathematical analysis as well as the synthesis of empirical laws that play the
unique role leading towards the underlying fundamental laws. Among them, the
first major step is the deduction of the inverse square law from Kepler's area
law and ellipse law, which was first achieved by Newton most likely sometimes
in 1680--81, and another most crucial step is the proof of the integration
formula for the gravitation attraction of spherically symmetric bodies,
nowadays often referred to as the \textquotedblleft superb
theorem\textquotedblright. In \emph{Principia}, these major results are
respectively stated as Proposition 11 and Proposition 71 of Book I, where
Newton deals with the laws of motion in vacuum.

Certainly, Newton's celebrated treatise is widely worshipped also today.
However, his proofs of the two most crucial propositions are quite difficult
to understand, even among university graduates. Therefore, a major goal of
this article is to provide elementary and clean-cut proofs of these results.

In the Prelude we shall briefly recall the major historical events and the
central figures contributing to the gradual understanding of planetary motions
prior to Newton. Moreover, the significance of the area law is analyzed in the
spirit of Newton's, but in modern terms of classical mechanics, for the
convenience of the reader.

Section 3 is devoted to a review of the focal and analytic geometry of the
ellipse, the type of curves playing such a predominant role in the work of
Kepler and Newton. Of particular interest is the curvature formula, which
Newton must have known, but for some reason appears only implicitly in his
proof of the inverse square law. In Section 4 we shall give three simple
different proofs of this law, allowing the modern reader to view Newton's
original and rather obscure proof with modern critical eyes. Moreover,
Newton's original and geometric proof of his \textquotedblleft superb
theorem\textquotedblright\ is not easy reading, which maybe explains the
occurrence of many new proofs in the more recent literature. We believe,
however, the proof given in Section 5 is the simplest one, in the spirit of
Newton's proof but based on one single new geometric idea.

Finally, for the sake of completeness, in Section 6 we shall solve what is
called the Kepler problem in the modern terminology, but was in fact referred
to as the \textquotedblleft inverse problem\textquotedblright\ at the time of
Newton and some time after, see \cite{Speis}. We present two proofs, one of
them is rather of standard type, reducing the integration to the solution of
Binet's equation, which in this case is a well known and simple second order
ODE in elementary calculus with $\sin(x)$ and $\cos(x)$ as solutions. But
there is an even simpler proof, where the integration problem is just finding
the antiderivatives of these two functions.

One may contend that "simple" proofs of the above classical problems are
nowadays found in a variety of calculus textbooks or elsewhere, so is there
anything new at all? The original motivation for the present article came,
however, from reading Chandrasekhar [3], which encouraged us to write a rather
short but coherent presentation accessible for the "common reader", in a
historical perspective and in the spirit of Newton's original approach.

\section{Prelude}

\subsection{Astronomy and geometry of the antiquity up to the Renaissance}

The ancient civilization of Egypt and Babylon had already accumulated a wealth
of astronomical and geometric knowledges that those great minds of Greek
civilization such as Thales (ca. 624--547 BC), Pythagoras (ca. 569--475 BC)
etc. gladly inherited, studied and deeply reflected upon. In particular,
Pythagoras pioneered the philosophical belief that the basic structures of the
universe are harmonious and based upon simple fundamental principles, while
the way to understand them is by studying numbers, ratios, and shapes. Ever
since then, his remarkable philosophical foresight still inspires generations
after generations of rational minds.

Following such a pioneering philosophy, the Pythagoreans devoted their studies
to geometry and astronomy and subsequently, geometry and astronomy became the
two major sciences of the antiquity, developing hand in hand. For example,
those great geometers of antiquity such as Eudoxus (408--355 BC), Archimedes
(287--212 BC), and Apollonius (ca. 262--190 BC), all had important
contributions to astronomy, while those great astronomers of antiquity such as
Aristarchus(ca. 310--230 BC), Hipparchus(190--120 BC), and Ptolemy(ca. 85--165
AD) all had excellent geometric expertise. \ We mention here three well-known
treatises that can be regarded as the embodiment of the glory of scientific
achievements of the antiquity, namely

\begin{itemize}
\item Euclid's \emph{Elements} (13 books)

\item Apollonius: \emph{Conics} (8 books)

\item Ptolemy: \emph{Almagest} (13 books)
\end{itemize}

\subsection{Copernicus, Tycho de Brahe, and Kepler: The new astronomy}

Today, it is a common knowledge that the Earth is just one of the planets
circulating around the sun. But this common knowledge was, in fact, the
monumental achievement of the new astronomy, culminating the successive
life-long devotions of Copernicus (1473--1543), Tycho de Brahe (1546--1601),
and Kepler (1571--1630). Kepler finally succeeded in solving the problem on
planetary motions (see below) that had been puzzling the civilization of
rational mind for many millenniums.

In the era of the Renaissance, Euclid's \emph{Elements} and Ptolemy's
\emph{Almagest} were used as important text books on geometry and astronomy at
major European universities. At Bologna University, Copernicus studied deeply
Ptolemy's \emph{Almagest }as an assistant of astronomy professor Navara, and
both of them were aware of \emph{Almagest}'s many shortcomings and troublesome
complexities. In 1514, inspired by the account of Archimedes on the
heliocentric theory of Aristarchus, he composed his decisive \emph{Commentary
}$(1515)$, outlining his own heliocentric theory which was finally completed
as the book \emph{De revolutionibus orbium coelestium }(1543)\emph{.
}Nowadays, this is commonly regarded as the heralding salvo of the modern
scientific revolution.

Note that a creditable astronomical theory must pass the test of accurate
predictions of verifiable astronomical events, such as observable events on
planetary motions. However, just a qualitatively sound heliocentric model of
the solar system would hardly be accepted as a well-established theory of
astronomy. Fortunately, almost like a divinely arranged \textquotedblleft
relay in astronomy\textquotedblright, the most diligent astronomical observer
Tycho de Brahe, with generous financial help from the King of Denmark and
Norway, made twenty years of superb astronomical observations at Uraniborg on
the island of Hven, and subsequently, Johannes Kepler became his assistant
(1600--1601), and moreover, succeeded him as Imperial Mathematician (of the
Holy Roman Empire, in Prague) after the sudden death of Tycho de Brahe. With
Kepler's superb mathematical expertise and marvelous creativity, it took him
20 years of hard work and devotion to finally succeed in solving the
millennium puzzle of planetary motions, namely the following remarkable
Kepler's laws, which we may state as follows:

\ \ \ \ \ \ 

\underline{Kepler's first law} (the ellipse law): The planets move on
elliptical orbits with the sun situated at one of the foci.

\underline{Kepler's second law} (the area law): The area per unit time
sweeping across by the line interval joining the planet to the sun is a
constant, as illustrated by Figure \ref{F1}.

\underline{Kepler's third law} (the period law): The ratio between the cube of
the major axis and the square of the period, namely $(2a)^{3}/T^{2}$, is the
same constant for all planets.

\ \ \ \ \ \ \ \ \ \ \ \ \ \ \ \ \ \ \
\begin{figure}[ptb]%
\centering
\includegraphics[
natheight=2.201000in,
natwidth=2.723800in,
height=2.2115in,
width=2.7308in
]%
{F:/KeplerNewtonHooke/Figures/Figure1.jpg}%
\caption{Illustration of the area law\label{F1}}%
\end{figure}

The following are the major publications of Kepler on his new astronomy:

\begin{itemize}
\item Astronomia Nova (1609)

\item 3 volumes of Epitome of Copernican astronomy (1618-1621)

\item Harmonice Mundi (1619)

\item Tabulae Rudolphinae (1627)
\end{itemize}

First of all, the predictions of the Rudolphine tables turned out to be
hundred times more accurate than that of the others. Moreover, Kepler
predicted the Mercury transit of Nov. 7, 1631 (which was observed in Paris by
P. Gassendi), and the Venus transit of Dec. 7, 1631 (that could not be
observed in Europe), while the next Venus transit would only occur after
another 130 years. Here, we would like to mention the remarkable achievements
of J. Horrocks (1618--1641). This brilliant young man was already fully in
command of Kepler's new astronomy at the age of 20, and after
\emph{correcting} Kepler's tables, he realized that a transit of Venus would
occur already on Nov. 24, 1639. His subsequent observation on the predicted
date, which he reported in \emph{Venus in Sole Visa}, was found in 1659, and
this is a noticeable triumph of the new astronomy.

\subsection{Galileo's empirical laws on terrestrial gravity, as evidence for
the inertia and force laws}

During his years at the University of Pisa (1589--92), Galileo Galilei
(1564--1642) wrote \emph{De Motu, }a series of essays on the theory of motion
(containing some mistakes, but was never published). Perhaps his most
important new idea in \emph{De Motu} is that one can test theories by
conducting experiments, such as testing his theory on falling bodies using an
inclined plane to vary the rate of descent.

In the years 1602--04 at Padua, he had returned to the dynamical study of
terrestrial gravity by conducting experiments on the inclined plane and the
pendulum. He had, by then, formulated the correct law of falling bodies and
worked out that a projectile follows a parabolic path. However, these
important results that laid the foundation of modern mechanics were only
published 35 years later in \emph{Discourses and mathematical demonstrations
concerning the two new sciences}. Here one finds the origin of the law of
inertia, in the sense that Galileo's conception of inertia is tantamount to
\emph{Newton's first law} of motion. Furthermore, Galileo's experiments on
falling bodies pointed toward the general force law (i.e. $\overrightarrow{F}%
=m\overrightarrow{a}$ ), namely \emph{Newton's second law}, which was
certainly also known to Newton's contemporaries Huygens, Halley, and Hooke.
However, the notion of "force" was, to some extent, already present in the
work of Archimedes on statics. On the other other hand, \emph{Newton's third
law} concerning mutually interacting forces, was a major innovation due to
himself, which we shall return to in Section 7.

\subsection{Equivalence between the area law and the action of a centripetal
force}

According to B. Cohen (cf. \cite{Cohen}, pp.167--169), a decisive step on the
path to universal gravity came in late 1679 and early 1680, when Robert Hooke
(1635-1703) introduced Newton to a new way of analyzing motion along a curved
trajectory, cf. Koyr\'{e} \cite{Koyre}. Hooke had cleverly seen that the
motion of an arbitrary body can be regarded as the combination of an inertia
component and a centripetal component. But he was unable to express this in a
more precise mathematical language. However, the possible influence of Hooke
on Newton's \emph{Principia} still engages many scientists and historians, cf.
e.g. Purrington \cite{Purrington}.

The terms \textquotedblleft centripetal force\textquotedblright\ and
\textquotedblleft radial force\textquotedblright\ will be used
interchangeably.The very first proposition of the \emph{Principia} develops
the dynamical significance of the law of areas by proving the mathematical
equivalence between the area law and the centripetality of the force, using
Hooke's technique. We include here a slight simplification of Newton's proof
in terms of modern terminology, namely

\begin{theorem}
\label{radial}Let $\overrightarrow{OP}$ be the position vector of a point mass
at $P$ moving in a plane, and let $\frac{dA}{dt}$ be the area swept out by
$\overrightarrow{OP}$ per unit time. Then $\frac{dA}{dt}$ is a constant if and
only if $\overrightarrow{OP}$ and the acceleration vector $\overrightarrow{a}$
are collinear, namely the force is centripetal.
\end{theorem}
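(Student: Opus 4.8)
The plan is to work directly with the vector-valued position function and exploit the connection between the areal velocity and the cross product $\overrightarrow{OP} \times \overrightarrow{v}$, where $\overrightarrow{v} = \frac{d}{dt}\overrightarrow{OP}$ is the velocity. The key observation is that the area swept out per unit time admits the clean expression
$$\frac{dA}{dt} = \frac{1}{2}\left|\overrightarrow{OP} \times \overrightarrow{v}\right|,$$
which follows from the elementary fact that the infinitesimal triangle swept by the radius vector in time $dt$ has area $\frac{1}{2}|\overrightarrow{OP} \times (\overrightarrow{v}\,dt)|$. Since the motion is planar, this cross product points perpendicular to the plane, so I may work with the scalar quantity $L := \overrightarrow{OP} \times \overrightarrow{v}$ (interpreting $\times$ as the signed scalar cross product in the plane, or equivalently viewing the vectors in $\mathbb{R}^3$ with zero third component and tracking the single nonzero component of the result). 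The strategy is then to differentiate $L$ with respect to time and read off the equivalence from the resulting formula.

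First I would compute $\frac{dL}{dt}$ by the product rule for the cross product:
$$\frac{d}{dt}\left(\overrightarrow{OP} \times \overrightarrow{v}\right) = \overrightarrow{v} \times \overrightarrow{v} + \overrightarrow{OP} \times \frac{d\overrightarrow{v}}{dt} = \overrightarrow{OP} \times \overrightarrow{a},$$
since $\overrightarrow{v} \times \overrightarrow{v} = 0$ and $\frac{d\overrightarrow{v}}{dt} = \overrightarrow{a}$ is the acceleration. This single identity does essentially all the work. Because $2\frac{dA}{dt} = |L|$ and $L$ keeps a fixed orientation (the motion stays in one plane, so the sign of the scalar $L$ cannot jump without passing through zero), the condition that $\frac{dA}{dt}$ be constant is equivalent to $\frac{dL}{dt} = 0$, i.e.\ to $\overrightarrow{OP} \times \overrightarrow{a} = 0$. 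That last condition says precisely that $\overrightarrow{OP}$ and $\overrightarrow{a}$ are parallel (collinear), which is the definition of a centripetal force. Reading the chain of equivalences in both directions gives the biconditional statement.

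For the two directions I would argue as follows. If $\overrightarrow{OP} \times \overrightarrow{a} = 0$, then $\frac{dL}{dt} = 0$, so $L$ is constant, hence $\frac{dA}{dt} = \frac{1}{2}|L|$ is constant. Conversely, if $\frac{dA}{dt}$ is constant then $|L|$ is constant; since $L$ is a continuous scalar that never changes sign on a connected time interval without vanishing, $L$ itself is constant, so $\frac{dL}{dt} = \overrightarrow{OP} \times \overrightarrow{a} = 0$, which forces collinearity of $\overrightarrow{OP}$ and $\overrightarrow{a}$.

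The main obstacle is not the computation, which is a one-line application of the product rule, but rather handling the sign/orientation issue cleanly in the converse direction. Passing from $|L| = \text{const}$ to $L = \text{const}$ requires the observation that $L$ is a scalar of constant sign throughout the motion; this is where one must be slightly careful, since a priori $|L|$ constant only gives $L = \pm c$. The resolution is that $L$ is continuous and the plane of motion is fixed, so the orientation of the swept area cannot reverse without $\frac{dA}{dt}$ momentarily vanishing, which a nonzero constant areal velocity excludes. I would state this point explicitly rather than gloss over it, since it is exactly the subtlety that an attentive first-year reader would notice.
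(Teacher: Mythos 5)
Your proof is correct and takes essentially the same route as the paper: the whole argument rests on the product-rule identity $\frac{d}{dt}(\overrightarrow{OP}\times\overrightarrow{v})=\overrightarrow{v}\times\overrightarrow{v}+\overrightarrow{OP}\times\overrightarrow{a}=\overrightarrow{OP}\times\overrightarrow{a}$ combined with the expression of $2\frac{dA}{dt}$ through the cross product $\overrightarrow{OP}\times\overrightarrow{v}$. The only difference is that the paper works from the outset with the signed scalar $(\overrightarrow{OP}\times\overrightarrow{v})\cdot\overrightarrow{n}$, where $\overrightarrow{n}$ is the fixed unit normal of the plane, which renders your (correct, and commendably explicit) continuity argument for passing from $|L|=\mathrm{const}$ to $L=\mathrm{const}$ unnecessary.
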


\begin{proof}
The velocity vector $\overrightarrow{v}$ $=\frac{d}{dt}\overrightarrow{OP}$
and the acceleration vector $\overrightarrow{a}$ $=\frac{d^{2}}{dt^{2}%
}\overrightarrow{OP}$ lie in the plane of the motion, with unit normal vector
$\overrightarrow{n}$, say. Then we can write
\begin{equation}
2\frac{dA}{dt}=\overrightarrow{(OP}\times\overrightarrow{v})\cdot
\overrightarrow{n} \label{ang}%
\end{equation}
On the other hand
\begin{equation}
\frac{d}{dt}(\overrightarrow{OP}\times\overrightarrow{v})=\overrightarrow{v}%
\times\overrightarrow{v}+\overrightarrow{OP}\times\overrightarrow{a}%
=\overrightarrow{OP}\times\overrightarrow{a}, \label{ang1}%
\end{equation}
and by differentiating both sides of equation (\ref{ang}), we conclude that
$\frac{dA}{dt}$ is a constant if and only if $\overrightarrow{OP}%
\times\overrightarrow{a}=0$, which simply means $\overrightarrow{OP}$ and
$\overrightarrow{a}$ are collinear.
\end{proof}

\begin{remark}
By (\ref{ang1}),\ centripetality of the force acting on $P$ means the vector
$\overrightarrow{OP}\times\overrightarrow{v}$ is constant during the motion,
and clearly it is also normal to the motion. In particular, centripetality
implies the point moves in a plane. However, in the above theorem the meaning
of \textquotedblleft area swept out\textquotedblright\ needs no further
explanation since the motion is by assumption confined to a fixed plane.
\end{remark}

\begin{remark}
\label{area law}Let $(r,\theta)$ be polar coordinates centered at the point
$O$, hence $r=|\overrightarrow{OP}|$ and $\dot{\theta}=\frac{d\theta}{dt}$ is
the \emph{angular veclocity}. Then the quantity $\frac{dA}{dt}$ expresses as
\begin{equation}
2\frac{dA}{dt}=r^{2}\dot{\theta}=k \label{2}%
\end{equation}
In particular, for a planet whose trajectory is the ellipse with the sun at
the focal point $O$, it follows from Kepler's area law that the quantity
(\ref{2}) is the constant $\frac{2\pi ab}{T}$.
\end{remark}

\begin{remark}
(i) In Book 1 of \emph{Principia}, Newton paid much attention to centripetal
forces $F(r)$, asking two natural questions, namely (i) for a given trajectory
curve, what is the attracting force $F(r)$, and conversely, (ii) for a given
force law such as $F(r)\sim r^{n},n=1,-2,-3,-5$, what curves are the
corresponding trajectories ? For a few decades the problems were,
respectively, referred to as the \textquotedblleft direct\textquotedblright%
\ and \textquotedblleft inverse\textquotedblright\ (Kepler) problem (cf.
Speiser \cite{Speis}), which is rather peculiar since the terms
\textquotedblleft direct\textquotedblright\ and \textquotedblleft
inverse\textquotedblright\ later became switched, and this is also the modern terminology.

(ii) Making a leap forward to J.P.B. Binet (1786--1856), the \emph{Binet
equation}
\begin{equation}
F(q^{-1})=-mk^{2}q^{2}\left(  \frac{d^{2}q}{d\theta^{2}}+q\right)  \text{,
\ }q=1/r\text{, } \label{Binet}%
\end{equation}
is providing a unifying approach to the central force problem. We shall
illustrate its usage by applying it both to the inverse Kepler problem (in
Section 4) and the Kepler problem (in Section 6). For studies of the inverse
Kepler problem in the physics literature, see for example Ram\cite{Ram} and
Sivardi\`{e}re\cite{Siv}.
\end{remark}

In what follows, the notation $\overline{PQ}$ is used both for the segment
between points $P$ and $Q$ and the length of the segment.

\section{On the focal geometry of the ellipse}

In Greek geometry, the shape of ellipses first occurred as the tilted plane
sections of a circular cylinder, as indicated in Figure \ref{F2}, while
\textquotedblleft ellipse\textquotedblright\ means \textquotedblleft
non-circular\textquotedblright\ or distorted circle. However, the discovery of
its remarkable geometric characterization greatly excited the enthusiasm of
studying such a natural generalization of circular shapes, namely

\begin{theorem}
An ellipse $\Gamma$ has two foci $\{F_{1},F_{2}\}$ such that the sum of
$\ \overline{PF_{1}}$ and $\overline{PF_{2}}$ is equal to a constant for all
points $P$ on $\Gamma$.
\end{theorem}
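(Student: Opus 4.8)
The plan is to exploit the very description of $\Gamma$ offered just above the statement, namely that $\Gamma$ arises as the tilted plane section of a right circular cylinder (Figure~\ref{F2}), and to apply the classical \emph{Dandelin sphere} construction. First I would inscribe two spheres in the cylinder, each of the same radius as the cylinder, lying on opposite sides of the cutting plane and tangent to it. Call these $S_{1}$ (above the plane) and $S_{2}$ (below), and let $F_{1}$ and $F_{2}$ be their respective points of tangency with the plane; these will turn out to be the sought-after foci. Each $S_{i}$ is tangent to the cylinder along a circle $C_{i}$ lying in a plane perpendicular to the axis.

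Next I would fix an arbitrary point $P\in\Gamma$ and consider the generator of the cylinder through $P$, i.e. the line on the cylinder parallel to the axis. This generator meets $C_{1}$ in a point $A$ and $C_{2}$ in a point $B$. The crux is the equal-tangent-length property of a sphere: the segment $\overline{PF_{1}}$ lies in the cutting plane, which is tangent to $S_{1}$ at $F_{1}$, while the segment $\overline{PA}$ lies along the generator, which is tangent to $S_{1}$ at $A\in C_{1}$; since both are tangent segments from the single external point $P$ to the same sphere $S_{1}$, we get $\overline{PF_{1}}=\overline{PA}$. By the identical argument applied to $S_{2}$, we obtain $\overline{PF_{2}}=\overline{PB}$.

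Combining these gives $\overline{PF_{1}}+\overline{PF_{2}}=\overline{PA}+\overline{PB}=\overline{AB}$. Because $A$ and $B$ lie on one and the same generator while $C_{1}$ and $C_{2}$ lie in two fixed parallel planes perpendicular to the axis, the length $\overline{AB}$ is simply the distance between those two planes measured along a generator, and this is the same for every choice of $P$. Hence the sum $\overline{PF_{1}}+\overline{PF_{2}}$ is constant over all $P\in\Gamma$, which is exactly the desired conclusion.

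The main obstacle is the careful justification of the equal-tangent-length claim, specifically verifying that a generator of the cylinder really is tangent to the inscribed sphere at its point on $C_{i}$, so that the elementary lemma ``the two tangent segments from a point to a sphere are equal'' applies in the plane spanned by $P$, the center of $S_{i}$, and the two tangency points. Once this is secured, the remaining ingredients—the existence of the two inscribed spheres and the constancy of $\overline{AB}$—follow immediately from the right-circular symmetry of the cylinder, so the argument is clean and entirely in the elementary, synthetic spirit advertised by the paper.
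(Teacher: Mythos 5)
Your proposal is correct and is essentially identical to the paper's own proof: the paper also uses the Dandelin sphere construction, inscribing two equal spheres in the cylinder tangent to the cutting plane at $F_{1}$, $F_{2}$, and concluding via the equal-tangent-length property that $\overline{PF_{1}}+\overline{PF_{2}}=\overline{PQ_{1}}+\overline{PQ_{2}}=\overline{Q_{1}Q_{2}}$ is constant. Your write-up merely spells out in more detail the tangency of the generator to each sphere and the constancy of the distance between the two tangency circles, which the paper leaves to the figure.
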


\begin{proof}
Referring to Figure \ref{F2}, $Z$ is a circular cylinder cut by a plane $\Pi$
and $\Gamma=Z\cap\Pi$ is the plane section. Let $\Sigma_{1}$ (resp.
$\Sigma_{2})$ be the spheres of the same radius, inscribed and tangent to $Z$,
which are tangent to $\Pi$ at $F_{1}$( resp. $F_{2}$). Then, for any
$P\in\Gamma$, one has $\overline{PF_{i}}=\overline{PQ_{i}}$, and hence%
\begin{equation}
\overline{PF_{1}}+\overline{PF_{2}}=\overline{PQ_{1}}+\overline{PQ_{2}%
}=\overline{Q_{1}Q_{2}}=\text{constant} \label{sum}%
\end{equation}
%

\begin{figure}[ptb]%
\centering
\includegraphics[
natheight=5.680100in,
natwidth=3.494900in,
height=4.9783in,
width=3.0765in
]%
{F:/KeplerNewtonHooke/Figures/Figure2.jpg}%
\caption{Ancient geometric proof of (\ref{sum})\label{F2}}%
\end{figure}

\end{proof}

\subsection{The optical property of the ellipse}

\begin{theorem}
Let $P$ be a point on an ellipse $\Gamma$ with $\left\{  F_{1},F_{2}\right\}
$ as the pair of foci. Then, the tangent $\mathcal{T}_{P}$ (resp. normal
$\mathcal{V}_{P})$ bisects the outer (resp. inner) angle of $\Delta
F_{1}PF_{2}$ at $P$.
\end{theorem}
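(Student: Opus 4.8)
The plan is to build everything on the focal-sum characterization just established, writing $f(Q)=\overline{QF_1}+\overline{QF_2}$ so that $\Gamma$ is precisely the level set $\{f=2a\}$, where $2a$ is the constant value of that sum. The optical property will then drop out of a single observation: the gradient of $f$ at $P$ equals the sum of the two outward unit focal directions, and this vector points along the normal to $\Gamma$. This reduces the whole statement to an elementary fact about sums of unit vectors, with no heavy computation.

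Concretely, I would let $\hat{u}_i$ denote the unit vector pointing from the focus $F_i$ toward $P$, so that the distance $\overline{QF_i}=|Q-F_i|$ has gradient $\hat{u}_i$ at $Q=P$, and hence $\nabla f(P)=\hat{u}_1+\hat{u}_2$. Parametrizing $\Gamma$ near $P$ by a curve $P(t)$ and differentiating the identity $f(P(t))\equiv 2a$ gives $\nabla f(P)\cdot P'(t)=0$. Since $P'(t)$ spans the tangent line $\mathcal{T}_P$, this shows that $\hat{u}_1+\hat{u}_2$ is orthogonal to $\mathcal{T}_P$, i.e. it lies along the normal $\mathcal{V}_P$.

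The concluding step is the elementary remark that the sum of two unit vectors bisects the angle between them, since the parallelogram they span is a rhombus. Thus $\hat{u}_1+\hat{u}_2$ bisects the angle between the outward directions $\hat{u}_1,\hat{u}_2$; equivalently, its opposite $-(\hat{u}_1+\hat{u}_2)$ bisects the angle between the inward directions $-\hat{u}_1,-\hat{u}_2$, which are exactly the directions $\overrightarrow{PF_1}$ and $\overrightarrow{PF_2}$ forming the inner angle of $\Delta F_1PF_2$ at $P$. As this bisector lies along $\mathcal{V}_P$, the normal bisects the inner angle, and the tangent $\mathcal{T}_P$, being perpendicular to $\mathcal{V}_P$, bisects the outer angle, as claimed.

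The main obstacle is not any difficult estimate but the orientation bookkeeping in this last step: one must track carefully which pair of rays (the outward $\hat{u}_i$ from the foci versus the inward $-\hat{u}_i$ from $P$ toward the foci) subtends the inner angle, so as to attach the normal to the inner and the tangent to the outer angle correctly. A clean alternative that sidesteps this, which I would note as a remark, is the reflection argument: reflect $F_2$ across $\mathcal{T}_P$ to a point $F_2'$; every $Q\neq P$ on the tangent line lies outside $\Gamma$, so $f(Q)>f(P)$, whence $P$ minimizes $\overline{QF_1}+\overline{QF_2'}=\overline{QF_1}+\overline{QF_2}$ along the line. This forces $F_1,P,F_2'$ to be collinear and yields the equal-angle (reflection) property directly.
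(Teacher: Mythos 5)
Your proof is correct, but it takes a genuinely different route from the paper's. The paper argues synthetically, in the spirit of Greek geometry: it starts from the outer-angle bisector $l$ at $P$, reflects $F_2$ across $l$ to $F_2'$, observes that $F_1$, $P$, $F_2'$ are collinear, and uses the triangle inequality to show that every point $Q\neq P$ of $l$ has focal sum exceeding $2a$; hence $l$ meets $\Gamma$ only at $P$, so $l=\mathcal{T}_P$. That argument needs no calculus, and as a byproduct it establishes that $\Gamma$ lies entirely on one side of each tangent line, i.e.\ it characterizes the tangent as the line meeting $\Gamma$ only at $P$. Your argument is the level-set/gradient one: $\nabla f(P)=\hat{u}_1+\hat{u}_2$ is normal to the level curve $\{f=2a\}$, and the sum of two unit vectors bisects their angle (rhombus diagonal). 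This is shorter on the geometric side and generalizes immediately --- the same computation gives the reflection property of the hyperbola (with $f$ the difference of focal distances) and the orthogonality of confocal conics --- but it presupposes the calculus identification of the geometric tangent with the span of $P'(t)$, and it requires one check you left implicit: $\nabla f(P)\neq 0$. This does hold, since $\hat{u}_1=-\hat{u}_2$ would put $P$ on the open segment $F_1F_2$, giving focal sum $2c<2a$, a contradiction; you should say so, because the bisector conclusion is vacuous for a zero vector. Finally, note that the ``clean alternative'' in your closing remark is essentially the paper's proof run in reverse: as you state it, it assumes that all points of $\mathcal{T}_P$ other than $P$ lie outside $\Gamma$, which is exactly the fact the paper's ordering of the argument derives rather than assumes, so in that form it would additionally need the convexity of the ellipse (or the fact that exterior points have focal sum greater than $2a$).
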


\begin{proof}
Let $l$ be the bisector of the outer angle of $\Delta F_{1}PF_{2}$ at $P$, and
$Q$ be another point on $l$. As indicated in Figure \ref{F3a} , $F_{2}%
^{\prime}$ is the reflection point of $F_{2}$ w.r.t. $l$. Then $\overline
{QF_{2}}=\overline{QF_{2}^{\prime}}$ and hence%
\[
\overline{QF_{1}}+\overline{QF_{2}}=\overline{QF_{1}}+\overline{QF_{2}%
^{\prime}}>\overline{F_{1}F_{2}^{\prime}}=\overline{PF_{1}}+\overline{PF_{2}%
}\text{,}%
\]
where the last identity follows by considering the angles at $P$, showing that
$P$ must, in fact, lie on the line through $F_{1}$ and $F_{2}^{\prime}$. Thus,
$Q$ must be outside of $\Gamma$, meaning that $l=$ $\mathcal{T}_{P}$ (i.e.
$l\cap\Gamma=\left\{  P\right\}  $). Now the statement about the normal
$\mathcal{V}_{P}$ follows immediately.
\end{proof}

\ \ \ \ \ \
\begin{figure}[ptb]%
\centering
\includegraphics[
natheight=3.550200in,
natwidth=3.651900in,
height=3.551in,
width=3.6519in
]%
{F:/KeplerNewtonHooke/Figures/Figure3.jpg}%
\caption{Illustration of the optical property of the ellipse\label{F3a}}%
\end{figure}

As usual, we shall always denote the constant $\overline{PF_{1}}%
+\overline{PF_{2}}$ of a given ellipse $\Gamma$ by $2a$, the distance between
$F_{1}$ and $F_{2}$ by $2c$, and $b=\sqrt{a^{2}-c^{2}}$. The sign // reads
\textquotedblleft is parallel to\textquotedblright.

\begin{corollary}
Let $d_{1}$(resp. $d_{2})$ be the distance between $F_{1}$(resp. $F_{2})$ and
a tangent line $\mathcal{T}_{P}$. Then
\begin{equation}
d_{1}d_{2}=b^{2}. \label{d1d2}%
\end{equation}

\end{corollary}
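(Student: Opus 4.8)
The plan is to exploit the optical property just established, which tells us that the tangent line $\mathcal{T}_P$ makes equal angles with the two focal radii $PF_1$ and $PF_2$. Writing $r_i=\overline{PF_i}$ and letting $\alpha$ denote this common angle, I would first drop perpendiculars from $F_1$ and from $F_2$ onto $\mathcal{T}_P$; since $\mathcal{T}_P$ passes through $P$, elementary right-triangle trigonometry gives $d_1=r_1\sin\alpha$ and $d_2=r_2\sin\alpha$. The essential point is that both distances are governed by the \emph{same} angle $\alpha$ — this is exactly the content of the optical property, and it is what makes the product $d_1 d_2 = r_1 r_2 \sin^2\alpha$ symmetric and tractable.

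Next I would bring in the two defining constants. Because the tangent bisects the \emph{outer} angle of $\Delta F_1 P F_2$ at $P$, the interior angle $\angle F_1 P F_2$ equals $\pi-2\alpha$. Applying the law of cosines to $\Delta F_1 P F_2$, whose side $\overline{F_1 F_2}$ equals $2c$, yields $4c^2 = r_1^2 + r_2^2 + 2 r_1 r_2 \cos 2\alpha$, using $\cos(\pi-2\alpha)=-\cos 2\alpha$. I would then substitute $r_1^2+r_2^2=(r_1+r_2)^2-2r_1 r_2 = 4a^2-2r_1 r_2$ (from the focal-sum relation $r_1+r_2=2a$) together with the identity $1-\cos 2\alpha = 2\sin^2\alpha$, collapsing the right-hand side to $4a^2 - 4 r_1 r_2 \sin^2\alpha$.

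Combining the two relations gives $r_1 r_2 \sin^2\alpha = a^2-c^2 = b^2$, which is precisely $d_1 d_2 = b^2$. The only place demanding care is the bookkeeping of the angle: one must check that the interior angle is $\pi-2\alpha$ so the sign in the law of cosines is correct, and that both focal radii lie on the same (interior) side of $\mathcal{T}_P$, so that a single angle $\alpha$ genuinely controls both perpendicular distances. Everything else is routine. As an alternative to the trigonometric route, one could reflect $F_2$ across $\mathcal{T}_P$ to the collinear point $F_2'$ exactly as in the previous proof and compute $d_2=\tfrac12\,\overline{F_2 F_2'}$ directly; but the computation above via $d_i=r_i\sin\alpha$ and the law of cosines seems the most economical.
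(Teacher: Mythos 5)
Your proof is correct, and it takes a genuinely different route from the paper's. The paper reflects \emph{both} foci across $\mathcal{T}_P$ to points $F_1',F_2'$; the optical property makes $F_1$, $P$, $F_2'$ collinear, so $\overline{F_1F_2'}=\overline{F_1'F_2}=2a$, and then the Pythagorean theorem applied to the two right triangles with the common leg $\overline{F_2'H}$ gives
\begin{align*}
4a^{2} &= (d_{1}+d_{2})^{2}+(\overline{F_{2}'H})^{2},\\
4c^{2} &= (d_{1}-d_{2})^{2}+(\overline{F_{2}'H})^{2},
\end{align*}
whose difference yields $4b^{2}=4d_{1}d_{2}$ with no trigonometry at all. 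You instead encode the optical property as the equality of the angles $\alpha$ that the two focal radii make with $\mathcal{T}_P$, write $d_{i}=r_{i}\sin\alpha$, and close the argument with the law of cosines in $\Delta F_{1}PF_{2}$ together with $r_{1}+r_{2}=2a$ and $1-\cos 2\alpha=2\sin^{2}\alpha$. Your angle bookkeeping is right: both foci lie on the interior side of the tangent, so the interior angle at $P$ is indeed $\pi-2\alpha$ and the sign in the law of cosines works out. What your route buys is the intermediate pedal-type identity $r_{1}r_{2}\sin^{2}\alpha=b^{2}$, which is of independent interest and resonates with the later use of the quantity $r\sin\varepsilon$ in Proof II of the inverse square law; what the paper's route buys is a purely synthetic, Pythagorean argument in the spirit of Greek geometry, where the unknown common leg is eliminated by subtraction rather than by a trigonometric identity. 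Your closing remark about reflecting only $F_2$ and taking $d_{2}=\tfrac12\overline{F_{2}F_{2}'}$ is essentially a halfway house between the two proofs.
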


\begin{proof}
Let $\left\{  F_{1}^{\prime},F_{2}^{\prime}\right\}  $ be the reflection
points of $\left\{  F_{1},F_{2}\right\}  $ w.r.t. $\mathcal{T}_{P}$, see
Figure . Then $\overline{F_{1}F_{2}^{\prime}}$ and $\overline{F_{1}^{\prime
}F_{2}}$ have length $2a$ and intersect at $P$. By the Pythagorean Theorem,
applied to $\Delta F_{1}F_{2}^{\prime}H$ and $\Delta F_{1}^{\prime}%
F_{2}^{\prime}H$, one has%
\begin{align*}
4a^{2}  &  =(d_{1}+d_{2})^{2}+(\overline{F_{2}^{\prime}H})^{2}\\
4c^{2}  &  =(d_{1}-d_{2})^{2}+(\overline{F_{2}^{\prime}H})^{2}%
\end{align*}
and the identity (\ref{d1d2}) follows from this.
\end{proof}

\begin{corollary}
As indicated in Figure \ref{F3a}, if $K$ is the point on\ $\overline{PF_{1}}%
$\ so that $\overline{OK}$ $//\mathcal{T}_{P}$, then $\overline{PK}=a.$
\end{corollary}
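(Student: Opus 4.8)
The plan is to exploit the optical property just proven, by reflecting one focus in the tangent line. Write $r_{1}=\overline{PF_{1}}$ and let $F_{2}^{\prime}$ be the reflection of $F_{2}$ in the tangent $\mathcal{T}_{P}$. As was already observed in the proof of the optical property, the points $F_{1},P,F_{2}^{\prime}$ are collinear, with $P$ between $F_{1}$ and $F_{2}^{\prime}$. Since $P$ itself lies on the mirror line $\mathcal{T}_{P}$, reflection preserves the distance from $P$, so $\overline{PF_{2}^{\prime}}=\overline{PF_{2}}$ and therefore
\[
\overline{F_{1}F_{2}^{\prime}}=\overline{F_{1}P}+\overline{PF_{2}^{\prime}}=\overline{PF_{1}}+\overline{PF_{2}}=2a .
\]
Thus the segment $\overline{F_{1}F_{2}^{\prime}}$, which lies along the line through $P$ and $F_{1}$, has length exactly $2a$, and this is where the semi-major axis enters the picture.

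Next I would bring in the center $O$ (the midpoint of $F_{1}F_{2}$) together with the midpoint $M$ of $F_{2}F_{2}^{\prime}$. Because $\mathcal{T}_{P}$ is the perpendicular bisector of $F_{2}F_{2}^{\prime}$, the point $M$ lies on $\mathcal{T}_{P}$, and since $P$ does too, the tangent is precisely the line $PM$. In the triangle $F_{1}F_{2}F_{2}^{\prime}$, the segment $OM$ joins the midpoints of the sides $F_{1}F_{2}$ and $F_{2}F_{2}^{\prime}$, so by the midsegment theorem $OM$ is parallel to $F_{1}F_{2}^{\prime}$ (the line $PF_{1}$) and has length $\tfrac12\overline{F_{1}F_{2}^{\prime}}=a$. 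This already produces a segment of the desired length $a$, and the remaining task is only to transport it onto $\overline{PK}$.

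Finally I would identify the parallelogram. The point $K$ lies on the line $F_{1}F_{2}^{\prime}=$ line $PF_{1}$, so $PK$ is parallel to $OM$; and by hypothesis $OK\parallel\mathcal{T}_{P}=PM$. Hence $OMPK$ has both pairs of opposite sides parallel and is a (nondegenerate) parallelogram, the two directions being distinct since the tangent is perpendicular to $F_{2}F_{2}^{\prime}$ while $PF_{1}$ is not. Equating opposite sides gives $\overline{PK}=\overline{OM}=a$, as claimed. I do not expect a serious obstacle here; the only real content is recognizing the right auxiliary point, namely that reflecting $F_{2}$ in $\mathcal{T}_{P}$ converts the constant $\overline{PF_{1}}+\overline{PF_{2}}=2a$ into a single segment of length $2a$ along the line $PF_{1}$, after which the midsegment/parallelogram bookkeeping is routine. (As a check, a one-line vector computation with $F_{2}^{\prime}=F_{1}+\tfrac{2a}{r_{1}}(P-F_{1})$ gives $\vec{OM}=\tfrac{a}{r_{1}}(P-F_{1})$ directly, confirming both the direction and the length $a$.)
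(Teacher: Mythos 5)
Your proof is correct, but it takes a genuinely different route from the paper's, so a comparison is worthwhile. The paper also converts the sum $\overline{PF_{1}}+\overline{PF_{2}}=2a$ into collinear segments by reflecting $F_{2}$, but it reflects in the \emph{normal} rather than in the tangent: it introduces the point $E$ on $\overline{PF_{1}}$ with $\overline{PE}=\overline{PF_{2}}$; since $\mathcal{V}_{P}$ bisects $\angle F_{1}PF_{2}$ (optical property), the base $\overline{F_{2}E}$ of the isosceles triangle $\Delta PEF_{2}$ is perpendicular to $\mathcal{V}_{P}$, hence $\overline{F_{2}E}$ // $\mathcal{T}_{P}$ // $\overline{OK}$, and the intercept theorem (with $O$ the midpoint of $\overline{F_{1}F_{2}}$) makes $K$ the midpoint of $\overline{F_{1}E}$; then
\[
2a=\overline{PF_{1}}+\overline{PF_{2}}=(\overline{PK}+\overline{EK})+\overline{PE}=2\,\overline{PK}
\]
finishes it, the length-$a$ segment landing directly on $\overline{PF_{1}}$ with no transfer needed. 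You instead reflect $F_{2}$ in $\mathcal{T}_{P}$ --- reusing the auxiliary point $F_{2}^{\prime}$ that the paper's proofs of the optical property and of (\ref{d1d2}) already employ --- so that $2a$ materializes as the single segment $\overline{F_{1}F_{2}^{\prime}}$; the semimajor axis then appears as the midsegment $\overline{OM}$ of $\Delta F_{1}F_{2}F_{2}^{\prime}$, and the parallelogram $OMPK$ transports it onto $\overline{PK}$. Both proofs rest on exactly the same ingredients (the optical property, the constant $2a$, and $O$ being the midpoint of $\overline{F_{1}F_{2}}$) and are of comparable length; yours gains coherence with the preceding corollaries and a clean closing vector check, at the cost of one extra transfer step, which the paper's choice of reflection avoids. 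A caveat shared equally by both arguments: if $P$ is an endpoint of the major axis, the constructions degenerate (for you $M=P$ and $K=O$; for the paper $\overline{F_{2}E}\perp\mathcal{V}_{P}$ fails), so both tacitly assume $P$ is not such a vertex --- where the statement is trivially true anyway, since then $K=O$ and $\overline{PK}=\overline{PO}=a$.
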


\begin{proof}
Let $E$ be the point on $\overline{PF_{1}}$so that $\overline{PE}%
=\overline{PF_{2}}$. Then
\[
\overline{F_{2}E}\perp\mathcal{V}_{P}\text{ \ }\Longrightarrow\overline
{F_{2}E}\text{ }//\text{ }\mathcal{T}_{P}\text{ }//\overline{\text{ }OK}%
\]
and hence $\overline{F_{1}K}=\overline{EK}$ and%
\[
\ \text{\ }2a=\overline{PF_{1}}+\overline{PF_{2}}=(\overline{PK}+\overline
{EK})+\overline{PE}=2\overline{PK}.
\]

\end{proof}

\subsection{A remarkable formula for the curvature of the ellipse}

\begin{theorem}
\label{curvature}As illustrated in Figure \ref{F3a}, set $\varepsilon$ to be
the angle between $\overline{PF_{1}}$ and $\mathcal{T}_{P}$, and $\rho$ to be
the radius of the osculating circle of $\Gamma$ at $P$ (i.e. the radius of
curvature). Then
\begin{equation}
\kappa=\frac{1}{\rho}=\frac{a}{b^{2}}\sin^{3}\varepsilon\label{secret}%
\end{equation}

\end{theorem}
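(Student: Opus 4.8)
The plan is to reduce everything to the standard rectangular parametrization $P=(a\cos t,\,b\sin t)$ of $\Gamma$, since then both the radius of curvature and the focal angle $\varepsilon$ become explicit functions of $t$, and the formula (\ref{secret}) will drop out after a single clean cancellation.

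First I would record the two derivatives $\dot P=(-a\sin t,\,b\cos t)$ and $\ddot P=(-a\cos t,\,-b\sin t)$ and feed them into the usual planar curvature formula $\kappa=|\dot x\ddot y-\dot y\ddot x|/(\dot x^{2}+\dot y^{2})^{3/2}$. The numerator collapses to the constant $ab$ (the $\sin^{2}t$ and $\cos^{2}t$ terms combine), so that
\begin{equation}
\kappa=\frac{ab}{(a^{2}\sin^{2}t+b^{2}\cos^{2}t)^{3/2}}.
\end{equation}
This step is purely mechanical; the entire content of the theorem is therefore hidden in identifying the speed factor $a^{2}\sin^{2}t+b^{2}\cos^{2}t$ with $b^{2}/\sin^{2}\varepsilon$.

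The key step is to compute $\sin\varepsilon$ from the same parametrization. Placing the focus at $F_{1}=(c,0)$, the focal radius is $r=\overline{PF_{1}}=a-c\cos t$, and $\varepsilon$ is the angle between $\overrightarrow{F_{1}P}$ and the tangent direction $\dot P$, so that $\sin\varepsilon=|\overrightarrow{F_{1}P}\times\dot P|/(|\overrightarrow{F_{1}P}|\,|\dot P|)$. Expanding the cross product gives $|\overrightarrow{F_{1}P}\times\dot P|=b(a-c\cos t)$, and here lies the one genuinely useful cancellation: the factor $a-c\cos t$ is exactly $r=|\overrightarrow{F_{1}P}|$, so
\begin{equation}
\sin\varepsilon=\frac{b}{\sqrt{a^{2}\sin^{2}t+b^{2}\cos^{2}t}}.
\end{equation}
Cubing this and substituting into the curvature expression immediately yields $\kappa=(a/b^{2})\sin^{3}\varepsilon$, which is (\ref{secret}). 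I expect this cancellation to be the only real obstacle: if one tries instead to eliminate $t$ before introducing $\varepsilon$, the algebra becomes unpleasant, whereas keeping $\varepsilon$ defined through the cross product makes the focal radius cancel automatically.

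An alternative, more geometric route stays closer to the corollaries already proved. From the optical property the tangent makes the angle $\varepsilon$ with both focal radii, so the two foot-of-perpendicular distances are $d_{1}=r\sin\varepsilon$ and $d_{2}=(2a-r)\sin\varepsilon$; combined with $d_{1}d_{2}=b^{2}$ from (\ref{d1d2}) this gives the pedal equation $p^{2}=b^{2}r/(2a-r)$ with $p=d_{1}$. One then invokes the classical pedal formula for the radius of curvature, $\rho=r\,dr/dp$, and differentiates. The advantage is that it uses (\ref{d1d2}) directly; the drawback, and the main obstacle on this path, is that the formula $\rho=r\,dr/dp$ itself requires a separate justification, so on balance I would present the parametric computation as the primary proof.
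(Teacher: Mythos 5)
Your main proof is correct, and its skeleton is in fact the same as the paper's: both use the parametrization $x=a\cos t$, $y=b\sin t$, both reduce the problem to $\kappa=ab/|\overrightarrow{v}|^{3}$, and both finish by establishing the identity $\sin\varepsilon=b/|\overrightarrow{v}|$. The genuine difference lies in how that identity is obtained. The paper derives it synthetically: it observes that the acceleration of this parametrized motion is $\overrightarrow{PO}$, invokes Corollary 3.4 (the point $K$ on $\overline{PF_{1}}$ with $\overline{OK}\,//\,\mathcal{T}_{P}$ satisfies $\overline{PK}=a$) to get $\overrightarrow{a}\cdot\overrightarrow{n}=a\sin\varepsilon$, and combines this with the parallelogram area $|\overrightarrow{v}|\,(\overrightarrow{a}\cdot\overrightarrow{n})=ab$. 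You instead compute $\sin\varepsilon$ purely in coordinates, via $|\overrightarrow{F_{1}P}\times\overrightarrow{v}|=b(a-c\cos t)=br$, so that the focal radius cancels; this computation is correct (indeed $r=a-c\cos t$ for $F_{1}=(c,0)$, and the cross product collapses to $ab-bc\cos t$). Each route has its merits: yours is entirely self-contained and needs neither the optical property nor Corollary 3.4, while the paper's reuses the focal geometry developed in Section 3 and stays closer to Newton's geometric spirit --- note also that the paper does not merely cite the planar curvature formula but derives it from the osculating-circle argument (\ref{a-normal}), in keeping with its first-year audience, whereas you take that formula as known. Your alternative pedal route is also viable: from the equal angles given by the optical property and (\ref{d1d2}) one indeed gets $p^{2}=b^{2}r/(2a-r)$, and $\rho=r\,dr/dp$ then yields (\ref{secret}); your caveat is exactly the right one, namely that the pedal formula $\rho=r\,dr/dp$ would itself require a separate justification, so presenting the parametric computation as the primary proof is the correct choice.
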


\begin{proof}
Let us begin with a pertinent fact on circular motions which led Hooke to
grasp the dynamical significance of curvature. A circular motion with radius
$\rho$ can be represented by
\[
\left\{
\begin{array}
[c]{c}%
x=\rho\cos\theta(t)\\
y=\rho\sin\theta(t)
\end{array}
\right.  \text{ \ or \ \ }\overrightarrow{OP}=\rho\binom{\cos\theta(t)}%
{\sin\theta(t)}%
\]
Thus, using Newton's notation of $\dot{\theta}=\frac{d\theta}{dt}$, etc.,
\begin{align*}
\overrightarrow{v}  &  =\binom{\dot{x}}{\dot{y}}=\rho\dot{\theta}\binom
{-\sin\theta}{\cos\theta}\text{, \ }|\overrightarrow{v}|^{2}=(\rho\dot{\theta
})^{2}\\
\overrightarrow{a}  &  =\binom{\ddot{x}}{\ddot{y}}=\rho\dot{\theta}^{2}%
\binom{-\cos\theta}{-\sin\theta}+\rho\ddot{\theta}\binom{-\sin\theta}%
{\cos\theta}\text{, \ \ }\overrightarrow{n}=\binom{-\cos\theta}{-\sin\theta}%
\end{align*}
Therefore%
\begin{align*}
\overrightarrow{a}\cdot\overrightarrow{n}  &  =\rho\dot{\theta}^{2}%
=\frac{|\overrightarrow{v}|^{2}}{\rho}\\
\kappa &  =\frac{1}{\rho}=\frac{\overrightarrow{a}\cdot\overrightarrow{n}%
}{|\overrightarrow{v}|^{2}}=\frac{|\overrightarrow{v}\times\overrightarrow{a}%
|}{|\overrightarrow{v}|^{3}}=\frac{\dot{x}\ddot{y}-\dot{y}\ddot{x}}{(\dot
{x}^{2}+\dot{y}^{2})^{3/2}}\
\end{align*}

Recall that the osculating circle at a point approximates a ($C^{2}$-smooth)
curve up to second order accuracy. Therefore, one can apply the above formula
for circular motions to the osculating circle at a point $P$. Thus, the
localization of the dynamics on such a curve at $P$ is essentially the same as
that of a corresponding motion on its osculating circle at $P$, and hence
\begin{equation}
\overrightarrow{a}_{P}\cdot\overrightarrow{n}_{P}=\frac{|\overrightarrow{v_{P}%
}|^{2}}{\rho}\text{, \ }\kappa=\frac{\dot{x}\ddot{y}-\dot{y}\ddot{x}}{(\dot
{x}^{2}+\dot{y}^{2})^{3/2}}\text{\ } \label{a-normal}%
\end{equation}
holds in general. This is the physical meaning of $\overrightarrow{a}_{P}%
\cdot\overrightarrow{n}_{P}$, the normal component of the acceleration.

Next, let us use the simple dynamical representation of a given ellipse
$\Gamma$, namely%
\[
x=a\cos t\text{, \ }y=b\sin t\text{,}
\]
to compute the curvature of $\Gamma$ at $P$, as follows:%

\[
\overrightarrow{v}=\binom{-a\sin t}{b\cos t}\text{, \ }\overrightarrow{a}%
=\binom{-a\cos t}{-b\sin t}=\overrightarrow{PO}\text{\ }%
\]
As can be seen from Figure \ref{F3a}, the area of the parallelogram spanned by
$\overrightarrow{v}$ and $\overrightarrow{a}$ is
\[
Area(//(\overrightarrow{v},\overrightarrow{a}))=|\overrightarrow{v}%
|(\overrightarrow{a}\cdot\overrightarrow{n})=ab
\]
and combined with (\ref{a-normal}) one has%

\begin{equation}
\frac{1}{\rho}|\overrightarrow{v}|^{2}=\overrightarrow{a}\cdot
\overrightarrow{n}\text{ }\Longrightarrow\frac{1}{\rho}=\frac{ab}%
{|\overrightarrow{v}|^{3}} \label{a.n}%
\end{equation}
On the other hand, by Corollary 3.4 and Figure \ref{F3a}, one also has
$\overline{PK}=a$, so $\overrightarrow{a}\cdot\overrightarrow{n}%
=a\sin\varepsilon$. Therefore, by (\ref{a.n})%
\[
ab=|\overrightarrow{v}|a\sin\varepsilon,\text{ \ i.e. }|\overrightarrow{v}%
|=\frac{b}{\sin\varepsilon},
\]
and formula (\ref{secret}) follows immediately from this.
\end{proof}

\subsection{The polar coordinate equation of an ellipse}

\begin{theorem}
Set $\overline{F_{1}P}=r$ and $\theta=\angle F_{2}F_{1}P$. Then the equation
of the ellipse $\Gamma$ is given by%
\begin{equation}
\frac{1}{r}=\frac{1}{b^{2}}(a-c\cos\theta) \label{ellipse}%
\end{equation}

\end{theorem}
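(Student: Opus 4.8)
The plan is to combine the focal sum property $\overline{PF_{1}}+\overline{PF_{2}}=2a$ (the characterization proved as the first theorem of this section) with the law of cosines applied to the triangle $\Delta F_{1}F_{2}P$. Since $\theta=\angle F_{2}F_{1}P$ is precisely the angle at the vertex $F_{1}$, and the two sides emanating from $F_{1}$ are $\overline{F_{1}P}=r$ and $\overline{F_{1}F_{2}}=2c$, the law of cosines expresses the opposite side $\overline{PF_{2}}$ directly in terms of $r$, $c$, and $\cos\theta$, which is exactly the combination we want.

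First I would write
\[
\overline{PF_{2}}^{2}=r^{2}+4c^{2}-4rc\cos\theta.
\]
Next, I would use the defining property of the ellipse to substitute $\overline{PF_{2}}=2a-r$, so the left-hand side becomes $(2a-r)^{2}$. Expanding both sides, the quadratic terms in $r$ cancel, leaving a relation linear in $r$, namely $a^{2}-ar=c^{2}-rc\cos\theta$. Collecting the $r$-terms gives $a^{2}-c^{2}=r(a-c\cos\theta)$, and substituting $b^{2}=a^{2}-c^{2}$ yields $b^{2}=r(a-c\cos\theta)$, which is exactly equation (\ref{ellipse}) after dividing through by $rb^{2}$.

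There is no serious obstacle in this argument; the computation is entirely elementary once the geometry is pinned down. The only point demanding a little care is the role of the angle: one must confirm that $\theta$ is the \emph{included} angle at $F_{1}$ between the segment toward the second focus $F_{2}$ and the segment toward $P$, so that the cosine rule enters with the product $2\cdot r\cdot 2c$ and the sign as written. As a sanity check, the formula should reproduce the extreme focal distances along the major axis: at $\theta=0$ it gives $r=b^{2}/(a-c)=a+c$, the distance to the far vertex, and at $\theta=\pi$ it gives $r=b^{2}/(a+c)=a-c$, the distance to the near vertex, both in agreement with the known geometry of the ellipse.
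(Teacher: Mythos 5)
Your proposal is correct and is essentially the paper's own proof: both apply the law of cosines in $\Delta F_{2}F_{1}P$ at the vertex $F_{1}$, substitute $\overline{PF_{2}}=2a-r$ from the focal sum property, and cancel the quadratic terms to obtain $b^{2}=r(a-c\cos\theta)$. Your added sanity check of the vertex distances at $\theta=0$ and $\theta=\pi$ is a nice touch but not a substantive difference.
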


\begin{proof}
$\overline{PF_{2}}=2a-r$, and by the cosine law applied to triangle $\Delta
F_{2}F_{1}P$%
\[
(2a-r)^{2}=r^{2}+4c^{2}-4cr\cos\theta,
\]
and consequently%
\[
4b^{2}=4a^{2}-4c^{2}=4r(a-c\cos\theta),
\]
which can be restated as in (\ref{ellipse}).
\end{proof}

\section{On the derivation of the inverse square law as a consequence of
Kepler's area law and ellipse law}

First of all, it follows readily from the area law, namely the quantity in
(\ref{2}) is a constant, that the acceleration vector $\overrightarrow{a}$ is
pointing towards $F_{1}$, see Theorem \ref{radial}. Thus, what remains to
prove is that the magnitude of $\overrightarrow{a}$ should be inverse
proportionate to the square of $\overline{PF_{1}}$ as a consequence of the
ellipse law. This is the monumental achievement of Newton which also led him
to the great discovery of the universal gravitation law, see Section 7.
However, his proof (cf. Proposition 11 of \cite{Newton}) is rather difficult
to understand.

The following are three much simpler proofs, each uses different aspects of
the focal geometry of ellipses discussed in Section 3:

\ \ \ \ \ \ \ \ \ \ \ \ \ \ \ \ \ \ \ \ \ \ \ \ \ \ \ \ \ \ \ \ \ \ \ \ \ \ \ \ \ \ 

\begin{proof}
I (in the spirit of Greek geometry)

By the area law and the identity (\ref{d1d2})%
\[
d_{1}|\overrightarrow{v}|=\frac{2\pi ab}{T},\text{ \ \ }d_{1}d_{2}=b^{2}%
\]
Therefore,
\begin{align*}
|\overrightarrow{v}|  &  =\frac{2\pi ab}{Td_{1}}=\frac{2\pi ab}{T}\frac{d_{2}%
}{b^{2}}=\frac{\pi a}{bT}\overline{F_{2}F_{2}^{\prime}}\\
\overrightarrow{F_{2}F_{2}^{\prime}}  &  =\overrightarrow{F_{2}F_{1}%
}+\overrightarrow{F_{1}F_{2}^{\prime}}=\binom{-2c}{0}+2a\binom{\cos\theta
}{\sin\theta}%
\end{align*}
Hence (see Figure \ref{F3a}),
\[
\overrightarrow{v}=\frac{\pi a}{bT}\binom{0}{-2c}+\frac{2\pi a^{2}}{bT}%
\binom{-\sin\theta}{\cos\theta}%
\]
and by Remark \ref{area law}
\[
\overrightarrow{a}=\frac{d}{dt}\overrightarrow{v}=\frac{2\pi a^{2}\dot{\theta
}}{bT}\binom{-\cos\theta}{-\sin\theta}=-\frac{\pi^{2}}{2}\frac{(2a)^{3}}%
{T^{2}}\frac{1}{r^{2}}\binom{\cos\theta}{\sin\theta}%
\]

\ \ \ \ \ \ \ \ \ \ \ \ \ \ \ \ \ \ \ \ \ \ \ \ \ \ \ \ 
\end{proof}

\begin{proof}
\ II (using the kinematic formula for curvature--- the proof Hooke sorely
missed ?)

By the area law (Remark \ref{area law}),Theorem \ref{curvature}, and
(\ref{a-normal}),%
\begin{align*}
r|\overrightarrow{v}|\sin\varepsilon &  =\frac{2\pi ab}{T}\\
|\overrightarrow{a}|\sin\varepsilon &  =\overrightarrow{a}\cdot
\overrightarrow{n}=\frac{1}{\rho}|\overrightarrow{v}|^{2}\text{, \ }\frac
{1}{\rho\sin^{3}\varepsilon}=\frac{a}{b^{2}},
\end{align*}
and consequently%
\begin{equation}
|\overrightarrow{a}|=\frac{1}{\rho\sin\varepsilon}|\overrightarrow{v}%
|^{2}=\frac{4\pi^{2}a^{3}}{T^{2}}\frac{1}{r^{2}} \label{|a|}%
\end{equation}

\begin{remark}
On page 110 of \cite{Chand}, S. Chandrasekhar states: \textquotedblleft That
Newton must have known this relation (cf. (\ref{secret})) requires no
argument!\textquotedblright. In fact, a thorough analysis of the proof of
Proposition 11 in \cite{Newton} will reveal that its major portion is devoted
to the proof of (\ref{secret}), and the inverse square law can then be deduced
essentially in the same way as the short simple step of Proof II. But such a
crucial role of curvature in his proof is, somehow, hidden in his presentation.
\end{remark}
\end{proof}

\ \ \ \ \ \ \ \ \ \ \ \ 

\begin{proof}
III (using analytic geometry)

For a planary motion with position vector
\begin{equation}
\overrightarrow{OP}=r\left(
\begin{array}
[c]{c}%
\cos\theta\\
\sin\theta
\end{array}
\right)  \label{position}%
\end{equation}
the acceleration vector is
\begin{equation}
\overrightarrow{a}=\frac{d^{2}}{dt^{2}}\overrightarrow{OP}=(\ddot{r}%
-r\dot{\theta}^{2})\left(
\begin{array}
[c]{c}%
\cos\theta\\
\sin\theta
\end{array}
\right)  +(2\dot{r}\dot{\theta}+r\ddot{\theta})\left(
\begin{array}
[c]{c}%
-\sin\theta\\
\cos\theta
\end{array}
\right)  \label{a-decomp}%
\end{equation}
Now, assuming Kepler's area law the force must be radial, and therefore the
second component in (\ref{a-decomp}) vanishes. Moreover, assuming the
trajectory is an ellipse (or conic section, but not a circle), we need only
show that $(\ddot{r}-r\dot{\theta}^{2})$ is inverse proportional to $r^{2}$.
This will be achieved by differentiation of the polar coordinate equation
(\ref{ellipse}).

At this point, however, it is instructive to derive the Binet equation and
apply it to our situation, since the remaining calculations will be similar in
both cases. Thus, setting $q=1/r$ as a new variable depending on $\theta$,
straightforward differentiation of $q$ and elimination of $\dot{\theta}$ by
introducing the constant $k=r^{2}\dot{\theta}$ yield the identity
\begin{equation}
\ddot{r}-r\dot{\theta}^{2}=-k^{2}q^{2}\left(  \frac{d^{2}q}{dq^{2}}+q\right)
\text{,} \label{Binet2}%
\end{equation}
which is just the Binet equation (\ref{Binet}) divided by $m$. Next, by
differentiation of the ellipse equation (\ref{ellipse})
\[
q=\frac{1}{b^{2}}(a-c\cos\theta),
\]
we deduce the identity
\[
\frac{d^{2}q}{dq^{2}}+q=\frac{a}{b^{2}}%
\]
which by substitution into (\ref{Binet2}) yields%
\begin{equation}
\ddot{r}-r\dot{\theta}^{2}=-\frac{k^{2}a}{b^{2}}\frac{1}{r^{2}}=-\frac
{4\pi^{2}a^{3}}{T^{2}}\frac{1}{r^{2}}. \label{eq3}%
\end{equation}

\end{proof}

\begin{remark}
The proof given by Newton in \cite{Newton} is historically the \textit{first
proof }of the inverse square law, a great historical event among the major
advances of the civilization of rational mind. Therefore, a careful reading as
well as a thorough understanding of the underlying pertinent ideas (or
insights) of such a proof are, of course, highly desirable and of great
significance towards one's understanding of the history of science. For
example, a reading of corresponding sections of \cite{Chand} might be helpful
for such an undertaking.
\end{remark}

\begin{remark}
If one compares Newton's proof and the above triple of proofs, one finds that
the \textit{area law and the focal geometry of ellipses always play the major
roles in each proof, while the differentiation of sine and cosine is the only
needed analytical computation involved in each proof. In fact, the main
differences between them lie in the ways of proper synthesis between the area
law and the focal geometry of ellipses.}
\end{remark}

\section{A crucial integration formula for the gravitation attraction of a
spherically symmetric body}

Newton's proof (cf. Proposition 71 in Book I of \cite{Newton}) for the
following important integration formula, nowadays often referred to as the
\textquotedblleft superb theorem\textquotedblright, is again not easy to
understand. Therefore, for the convenience of the reader, we include here an
elementary simple proof (cf. \cite{Hsiang}). In the very recent physics
literature, see Schmid \cite{Schmid} for a similar but still different
geometric proof.

\begin{theorem}
The total gravitation force acting on an outside particle by a body with
spherically symmetric mass distribution is equal to that of a point mass of
its total mass situated at its center.
\end{theorem}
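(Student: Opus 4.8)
The plan is to prove the "superb theorem" by reducing the three-dimensional problem to a one-dimensional integral via the spherical symmetry, so let me sketch the approach.

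The plan is to exploit the spherical symmetry in two stages: first reduce the solid body to a superposition of thin homogeneous spherical shells, and then establish the assertion for a single such shell. Since any spherically symmetric mass distribution is by definition a union of concentric shells of possibly varying density, and since gravitational forces add vectorially, it suffices to show that a uniform shell of radius $R$, total mass $M$ and center $O$ attracts an external unit point mass at $P$, with $\overline{OP}=d>R$, by a force of magnitude $GM/d^{2}$ directed along $\overrightarrow{PO}$. Integrating this single-shell result over all radii then immediately yields the theorem, the accumulated total mass of the body playing the role of the point mass concentrated at $O$.

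For the shell itself, I would first invoke the rotational symmetry about the axis $OP$: the net force on $P$ must lie along this axis, so only the axial component of each mass element's contribution survives. Accordingly I would slice the shell into thin circular rings perpendicular to $OP$, parametrized by the polar angle $\phi$ measured at $O$ from the axis. A ring at angle $\phi$ carries mass $dm=\tfrac{M}{2}\sin\phi\,d\phi$; every one of its points lies at the common distance $s$ from $P$ given by the law of cosines $s^{2}=R^{2}+d^{2}-2Rd\cos\phi$; and the cosine of the angle its elements make with the axis is $(d-R\cos\phi)/s$. Hence the axial force contributed by the ring is $dF=G\,dm\,(d-R\cos\phi)/s^{3}$, and the entire problem collapses to a single integral over $\phi\in[0,\pi]$.

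The decisive step—and the place where Newton's geometry does its real work—is to change the variable of integration from the angle $\phi$ to the distance $s$ itself. Differentiating the law of cosines gives $s\,ds=Rd\sin\phi\,d\phi$, while solving it for $\cos\phi$ yields $d-R\cos\phi=(d^{2}-R^{2}+s^{2})/(2d)$. Substituting both into the integrand reduces the awkward trigonometric expression to the elementary form $\tfrac{GM}{4Rd^{2}}\bigl((d^{2}-R^{2})/s^{2}+1\bigr)\,ds$, whose antiderivative is $s-(d^{2}-R^{2})/s$. As $\phi$ runs from $0$ to $\pi$, the new variable $s$ runs from $d-R$ to $d+R$, and evaluating the antiderivative between these limits produces exactly $4R$, whence the total force equals $GM/d^{2}$, independently of $R$. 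It is precisely this $R$-independence that permits the outer integration over shells to go through for an arbitrary radial density profile.

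I expect the only genuine obstacle to be recognizing the substitution $s\mapsto$ (distance from the variable ring to $P$) as the natural integration variable; once that geometric reparametrization is made, the computation is forced and essentially telescopic. A secondary subtlety worth checking is the sign bookkeeping in the axial cosine factor together with the correct matching of the endpoints $s=d\mp R$ to $\phi=0,\pi$; these must be handled carefully so as to land on $+4R$ rather than a spurious cancellation. The interior case $d<R$, giving zero net force inside a shell, would drop out of the identical calculation with limits $R-d$ and $R+d$, though it is not required for the stated external theorem.
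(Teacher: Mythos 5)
Your proof is correct: the ring decomposition, the mass element $dm=\tfrac{M}{2}\sin\phi\,d\phi$, the axial projection factor $(d-R\cos\phi)/s$, and the substitution $s\,ds=Rd\sin\phi\,d\phi$ all check out, and the evaluation of $s-(d^{2}-R^{2})/s$ between $s=d-R$ and $s=d+R$ does give $4R$, hence the force $GM/d^{2}$. However, your route is genuinely different from the paper's. The paper follows Newton's geometric idea: it introduces the inverse point $P'$ on $\overline{OP}$ defined by $\overline{OP'}\cdot\overline{OP}=R^{2}$, uses the similar triangles $\Delta OPQ\sim\Delta OQP'$ to get $\angle OQP'=\angle OPQ=\theta$ and $\overline{P'Q}/\overline{PQ}=R/\overline{OP}$, and then parametrizes the shell by the solid angle $d\sigma$ subtended at $P'$, which yields the key identity $dA\cos\theta=\overline{P'Q}^{2}d\sigma$. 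With that parametrization the axial force element becomes $Gm_{1}\rho\,(R^{2}/\overline{OP}^{2})\,d\sigma$ --- a \emph{constant} multiple of $d\sigma$ --- so the ``integration'' is nothing but $\int d\sigma=4\pi$; no antiderivative or change of variable is ever computed. What each approach buys: the paper's proof trades all calculus for one geometric inversion, making the integral trivially constant and staying close to Newton's own Proposition 71; your proof is the standard analytic argument, self-contained with only elementary calculus, it requires spotting the substitution to the distance variable $s$, and, as you note, it handles the interior case $d<R$ (zero net force) by merely changing the limits to $R-d$ and $R+d$, a bonus the paper's presentation does not pursue. Both proofs share the same first reduction, namely that spherical symmetry plus vector additivity of forces reduces the theorem to a single uniform shell.
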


\begin{proof}
First of all, the proof can be directly reduced to the case of a thin
spherical shell with a uniform mass density $\rho$ per area. The key idea of
this elementary geometric proof is to use the subdivision of the spherical
surface of radius $R$ \ induced by the subdivision of the total solid angle
(i.e. the unit sphere) centered at $P^{\prime}$ on $\overline{OP}$ with
$\overline{OP^{\prime}}\cdot\overline{OP}=R^{2}$, as illustrated in Figure 4.%

\begin{figure}[ptb]%
\centering
\includegraphics[
natheight=3.610700in,
natwidth=5.948500in,
height=3.6107in,
width=5.931in
]%
{F:/KeplerNewtonHooke/Figures/Figure4.jpg}%
\caption{Newton's \textquotedblleft superb theorem\textquotedblright}%
\end{figure}

For any given point $Q$ on the $R$-sphere, $\Delta OPQ$ and $\Delta
OQP^{\prime}$ have the same angle at $O$, and moreover, their corresponding
pairs of sides are in proportion, namely $P`P^{\prime}$ $\overline{P^{\prime}%
}P\prime$%
\[
\frac{\overline{OP}}{\overline{OQ}}=\frac{\overline{OP}}{R}=\frac
{R}{\ \overline{OP^{\prime}}}=\frac{\overline{OQ}}{\overline{OP^{\prime}}}.
\]
Therefore, $\Delta OPQ$ $\sim\Delta OQP^{\prime}$ and hence%
\[
\angle OQP^{\prime}=\angle OPQ\text{ \ }(:=\theta)\text{ \ and \ }%
\frac{\overline{P^{\prime}Q}}{\overline{PQ}}=\frac{R}{\overline{OP}}.
\]

Now, let $dA$ be the element of area on the $R$-sphere around $Q$ and
$d\sigma$ be its corresponding element of area on the unit sphere. Then, as
indicated in the magnified solid angle cone of $d\sigma$ in Figure 4, the
corresponding area element on the $\overline{P^{\prime}Q}$ -sphere centered at
$P^{\prime}$ is equal to $\overline{P^{\prime}Q}$ $^{2}d\sigma$ on the one
hand, but equals $dA\cos\theta$ on the other hand, because the dihedral angle
between the tangent planes of $dA$ (resp. $\overline{P^{\prime}Q}$
$^{2}d\sigma)$ at $Q$ is equal to $\theta$. Consequently,
\[
dA\cos\theta=\overline{P^{\prime}Q}^{2}d\sigma.
\]
\qquad\ Note that the contribution of $d\overrightarrow{F}$ to the total
composite force is equal to $|d\overrightarrow{F}|\cos\theta$, namely (with
mass $m_{1}$ at $P$)%
\begin{align*}
|d\overrightarrow{F}|\cos\theta &  =G\frac{m_{1}\rho dA}{|\overline{PQ}|^{2}%
}\cos\theta=Gm_{1}\rho\frac{\overline{P^{\prime}Q}^{2}}{\overline{PQ}^{2}%
}d\sigma=Gm_{1}\rho\frac{R^{2}}{\overline{OP}^{2}}d\sigma.\\
&  \
\end{align*}
Therefore, the total gravitation force is given by%
\begin{align}%
{\displaystyle\int}
|d\overrightarrow{F}|\cos\theta &  =Gm_{1}\rho\frac{R^{2}}{\overline{OP}^{2}}%
{\displaystyle\int}
d\sigma=G\frac{m_{1}4\pi R^{2}\rho}{\overline{OP}^{2}}\label{grav}\\
&  =G\frac{m_{1}m_{2}}{\overline{OP}^{2}},\text{ \ }m_{2}=4\pi R^{2}%
\rho.\nonumber
\end{align}

\begin{remark}
(i) Newton was undoubtedly aware of the importance of the integration formula
for the gravitational force of a spherically symmetric body, both for
celestial gravity and for terrestrial gravity, and moreover, for the
unification of both, thus enabling him to proclaim the law of universal gravitation.

(ii) In fact, he must have been working hard on it, ever since his success in
proving the inverse square law some time in 1680--81. His letter of June 20,
1686, to Halley recorded his repeated failure up to around 1685, while his
final success, in 1686, of proving such a wonderful simple formula is actually
the crown-jewel of his glorious triumph --- the law of universal gravitation
(cf. \cite{Chand}, \cite{Newton}).
\end{remark}
\end{proof}

\section{A simple proof of the Kepler problem}

For the convenience of the reader, we include here a simple proof on the
solution of the Kepler problem, as follows:

\begin{theorem}
\label{Kepler problem}Suppose that the acceleration of a motion is centripetal
and inverse proportional to the square of $\overline{OP}=r$, namely for some
constant $C>0$,%
\begin{equation}
\overrightarrow{a}=\frac{C}{r^{2}}\binom{-\cos\theta}{-\sin\theta}.
\label{accel}%
\end{equation}
Then the motion satisfies the area law and its orbit is a conic section with
the center as one of its foci.
\end{theorem}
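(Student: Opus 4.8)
The plan is to treat the two assertions separately, since they come from very different parts of the hypothesis. The area law is essentially immediate: condition \eqref{accel} says precisely that $\overrightarrow{a}$ is collinear with $\overrightarrow{OP}$ (both are scalar multiples of $\binom{\cos\theta}{\sin\theta}$), so Theorem \ref{radial} yields at once that $\frac{dA}{dt}$ is constant. Equivalently, writing $k=r^{2}\dot{\theta}$ as in Remark \ref{area law}, the tangential component $2\dot{r}\dot{\theta}+r\ddot{\theta}=\frac{1}{r}\frac{d}{dt}(r^{2}\dot{\theta})$ of the acceleration \eqref{a-decomp} must vanish, which is just the statement that $k$ is a constant of the motion. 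I would record this constant $k$ explicitly, since it drives the rest.

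For the orbit I would give the ``antiderivative'' proof advertised in the introduction, which I expect to be the cleanest. The key device is to change the independent variable from $t$ to $\theta$ using the constancy of $k$ just established. Since $\dot{\theta}=k/r^{2}$ we have $\frac{1}{r^{2}}=\frac{\dot{\theta}}{k}$, so \eqref{accel} becomes $\frac{d\overrightarrow{v}}{dt}=\frac{C\dot{\theta}}{k}\binom{-\cos\theta}{-\sin\theta}$; dividing through by $\dot{\theta}$ (chain rule) eliminates time entirely and leaves
\[
\frac{d\overrightarrow{v}}{d\theta}=\frac{C}{k}\binom{-\cos\theta}{-\sin\theta}.
\]
Now the integration is literally to antidifferentiate $\cos\theta$ and $\sin\theta$, giving $\overrightarrow{v}=\frac{C}{k}\binom{-\sin\theta}{\cos\theta}+\overrightarrow{c}$ for a constant vector $\overrightarrow{c}=\binom{c_{1}}{c_{2}}$; that is, the hodograph is a circle.

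To extract the orbit itself I would then feed this back into the conserved quantity $k=(\overrightarrow{OP}\times\overrightarrow{v})\cdot\overrightarrow{n}$ coming from \eqref{ang}--\eqref{ang1}. Writing $\overrightarrow{OP}=r\binom{\cos\theta}{\sin\theta}$ and substituting the integrated $\overrightarrow{v}$, the planar cross product of $\binom{\cos\theta}{\sin\theta}$ with $\binom{-\sin\theta}{\cos\theta}$ is $1$, so everything collapses to the scalar identity $\frac{k}{r}=\frac{C}{k}+c_{2}\cos\theta-c_{1}\sin\theta$, i.e.
\[
\frac{1}{r}=\frac{C}{k^{2}}+\frac{1}{k}\bigl(c_{2}\cos\theta-c_{1}\sin\theta\bigr).
\]
Absorbing the phase of the sinusoid by a rotation of the coordinate axes, this becomes $\frac{1}{r}=\frac{C}{k^{2}}(1+e\cos\theta)$, which is exactly the polar equation of a conic with the attracting center $O$ at a focus, matching the ellipse equation \eqref{ellipse} in the bound case $e<1$. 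As a second route I would note the standard Binet derivation: substituting the radial component $-C/r^{2}=-Cq^{2}$ into the Binet identity \eqref{Binet2} cancels the factor $q^{2}$ and leaves the constant-coefficient equation $\frac{d^{2}q}{d\theta^{2}}+q=C/k^{2}$, whose general solution $q=\frac{C}{k^{2}}+A\cos(\theta-\theta_{0})$ is again a focus-conic. The only step needing care --- and the one I would flag as the main obstacle, though it is a mild one --- is the final identification: one must verify that the integration constant genuinely enters as a sinusoid in $\theta$ (so that the level set $1/r=\text{const}+\text{sinusoid}$ really is a conic with focus at $O$) and fix its phase using the rotational freedom in $\theta$. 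Everything else reduces to the constancy of $k$ together with the antiderivatives of $\sin$ and $\cos$.
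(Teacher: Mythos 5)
Your proposal is correct and follows essentially the same route as the paper's own Proof I: establish constancy of $k=r^{2}\dot{\theta}$, switch the independent variable from $t$ to $\theta$, antidifferentiate $\sin\theta$ and $\cos\theta$ to get the velocity, and substitute back into the conserved quantity $k$ to obtain the focal polar equation (\ref{conic3}); the only cosmetic difference is that the paper normalizes the constant vector $\overrightarrow{\delta}$ up front (taking $\overrightarrow{v}(0)$ vertical) whereas you absorb the phase by a rotation at the end. Your alternative Binet-equation route is likewise the paper's Proof II.
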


\begin{proof}
I (vector calculus involving only $\sin x$ and $\cos x$)

The centripetality property amounts to the area law (see Section
\ref{area law}), so there exists a constant $k$ such that%

\begin{equation}
\text{ }2\text{$\frac{dA}{dt}$}=\text{\ }r^{2}\dot{\theta}=k \label{eq4}%
\end{equation}
Therefore, it follows directly from (\ref{accel}) and (\ref{eq4}) that%
\[
\frac{d}{d\theta}\overrightarrow{v}(\theta)=\overrightarrow{a}(\theta
)\frac{dt}{d\theta}=\frac{C}{k}\frac{d}{d\theta}\binom{-\sin\theta}{\cos
\theta}%
\]
Hence, there exists a constant vector $\overrightarrow{\delta}$ such that%
\[
\overrightarrow{v}(\theta)=\frac{C}{k}\binom{-\sin\theta}{\cos\theta
}+\overrightarrow{\delta}.
\]
Without loss of generality, we may assume that $\overrightarrow{v}(0)$ is
pointing vertically, thus having $\overrightarrow{\delta}=\binom{0}{\delta}$.

Now, again using the area law, one has from (\ref{ang}) and (\ref{eq4})%
\[
r\left\vert
\begin{array}
[c]{cc}%
\cos\theta & \frac{C}{k}(-\sin\theta)\\
\sin\theta & \frac{C}{k}\cos\theta+\delta
\end{array}
\right\vert =k,
\]
namely%
\begin{equation}
\frac{1}{r}=\frac{C}{k^{2}}(1+e\cos\theta),\text{ \ }e=\frac{k\delta}{C},
\label{conic3}%
\end{equation}
which is exactly the polar coordinate equation (\ref{ellipse}) of a conic
section with $\pm e$ as its eccentricity and $O$ as a focal point.
\end{proof}

\ \ \ \ \ \ \ \ \ \ \ \ \ \ \ \ \ \ \ \ \ \ \ \ \ \ \ \ \ \ \ \ \ \ \ \ \ \ 

\begin{proof}
II (integration, starting from the Binet equation)

By assumption, the radial force is of type $F=-\alpha q^{2},\alpha>0$
constant, and $q=1/r$. Therefore, by the Binet equation (\ref{Binet})%
\[
\frac{d^{2}q}{d\theta^{2}}+q=h>0\text{ (constant)}%
\]
Since the general solution of this 2nd order ODE can be written as
$q=A\cos(\theta+\theta_{0})+h$, with the appropriate choice of the axis
$\theta=0$ the solution takes the form (\ref{conic3}).
\end{proof}

\begin{remark}
(i) For the Kepler problem, the constant $C$ in (\ref{accel}) is positive
(i.e. in the case of attraction force). However, the proof also works as well
for $C<0$ (i.e. repulsive force), which is important in quantum mechanics for
studying scattering.

(ii) Mathematically, the solution of the Kepler problem amounts to solve the
second order ODE (\ref{accel}) in terms of the given initial data, especially
the uniqueness without \ appealing to sophisticated theorems. Our first proof
of Theorem \ref{Kepler problem} accomplishes such a task in two simple,
elementary steps, namely, firstly obtaining the solution of
$\overrightarrow{v}(\theta)$ in terms of the initial velocity by a direct
application of the area law (i.e. consequence of the centripetality), and then
obtaining the polar coordinate equation of the trajectory by another direct
application of the area law.

Note that the area law is actually the dynamical manifestation of the
rotational symmetry of the plane with respect to the center of the
centripetality. Therefore, it is, of course, natural to use the polar
coordinate system and compute $\frac{d}{d\theta}\overrightarrow{v}(\theta)$ in
the proof. In retrospect, it is not only the simplest proof with perfect
generality and the least of technicality, but it is also the most natural way
of solving the Kepler problem.

(iii) We refer to \cite{Arnold}, \cite{Chand}, \cite{Newton}, \cite{Speis} for
comparison of proofs of Theorem \ref{Kepler problem}, as well as for the
discussions of whether Newton actually proved it.
\end{remark}

\section{Concluding remarks}

\qquad\textbf{(i)} In Book III of \emph{Principia}, Newton presents his
crowning achievements, namely a demonstration of the structure of the
\textquotedblleft system of the world\textquotedblright, derived from the
basic principles that he had developed in Book I and Book II. Newton's three
laws of motion and the law of universal gravitation are for the first time
seen to provide a unified quantitative explanation for a wide range of
physical phenomena. In particular, they provide the foundation of celestial
mechanics, and the first complete mathematical formulation of the classical
$n$-body problem appears in Newton's Principia. The law of universal
gravitation is, in fact, the first and also one of the most important
scientific discoveries in the entire history of sciences. An in-depth
understanding of how it arises naturally from the mathematical analysis as
well as synthesis of those empirical laws of Kepler and Galilei is not only
instructional but also inspirational.

\ \ \ \ \ \ \ \ 

\textbf{(ii)} The law of universal gravitation reflects the physical principle
expounded by Newton that all bodies interact gravitationally. But such a
statement presupposes a deeper understanding of the force law $F=ma$, namely
that two interacting bodies attract each other by equal forces and in opposite
directions. This follows from \textit{Newton's third law}, which is his own
insight, stating that for every action there is an opposite reaction. In the
case of gravitation this interaction is expressed by the basic and well known
formula
\begin{equation}
F=G\frac{mM}{r^{2}} \label{gravity}%
\end{equation}
for the mutual gravitation force between two point masses $m$ and $M$
separated by the distance $r$, where $G$ is the \textit{gravitational
constant. }By Newton's \textquotedblleft superb theorem\textquotedblright%
\ (see Section 5), the same formula holds for two bodies with spherically
symmetric mass distributions and total masses $m$ and $M$, and $r$ is the
distance between their centers. For many bodies, such as the planets circling
the sun, the bodies attract one another and therefore they also perturb one
another's orbits. Still, as pointed out by Newton, the law of universal
gravitation explains why the planets follow Kepler's laws approximately and
why they depart (as is also observed) from the laws in the way they do. Let us
briefly recall the underlying reasoning.

First, consider a sun-planet system with masses $M$ and $m$, ignoring the
other celestial bodies. By combining the force law $\overrightarrow{F}%
=m\overrightarrow{a}$ and formula (\ref{gravity}), it may seem that one is led
to equation (\ref{accel}), with $C=GM$, and thus the planet's orbit will be a
solution of the Kepler problem as stated in Theorem \ref{Kepler problem},
namely an ellipse with the sun at one of the focal points. However, this
reduction of the sun-planet problem to a one-body (or Kepler) problem centered
at the sun is only approximately correct. As we would phrase it today, the
validity of the Newtonian dynamics hinges upon using an inertial frame of
reference, namely with the origin \textquotedblleft at rest\textquotedblright.

How did Newton himself imagine the origin of an inertial frame could be
chosen? The sun is much larger than the planets, but Newton was aware of the
tiny motion of the sun due to the attraction of the planets. He estimated the
Center of the World, namely the center of gravity of the whole solar system,
to be very close to the sun, say within one solar diameter.

For a general two-body system, the common center of gravity is
\textquotedblleft at rest\textquotedblright\ if the interaction with other
bodies is neglected. Thus, the position of one body determines the position of
the other, and Newton argues correctly that the two-body problem again reduces
to a one-body problem with radial attraction towards the center of gravity.
So, both bodies follow Keplerian orbits with the latter point as a common focus.

However, whereas an exact solution of the two-body problem is one of the great
triumphs of classical mechanics, the non-integrability of the $n$-body problem
for $n$ $\geq3$, which is well known nowadays, was maybe suspected already by
Newton when he wrote in his tract \textit{De Motu (1684): }\textquotedblleft%
--- to define these motions by exact laws allowing of convenient calculation
exceeds, unless I am mistaken, the force of the entire human
intellect\textquotedblright.

\ \ \ \ \ \ \ \ 

\textbf{(iii)} The measurement of the gravitational constant $G$ in formula
(\ref{gravity}) has a long history; in fact, the formulation of gravity in
terms of $G$ did not become standard until the late 19th century. The first
successful experiment in the laboratory, by H. Cavendish (1731--1810) in 1798,
aimed at measuring the mass $M_{e}$ of the earth, or equivalently, the
(average) density $\rho_{e}$ of the earth from the knowledge of the earth's
radius $R$. However, knowing the acceleration of gravity $g$ at the surface of
the earth, measuring $\rho_{e}$ amounts to measuring $G$ due to the relations%

\[
G=\frac{gR^{2}}{M_{e}}=\frac{3g}{4\pi R\rho_{e}}.
\]

The apparatus used by Cavendish was actually designed by the geologist J.
Michell (1724--1793), who was a pioneer in seismology and did also important
work in astronomy. Although Laplace (1796) is usually credited for being the
first who described the concept of a black hole (condensed star), Michell
argued in a 1784 paper how such an objects could be observed from its
gravitational effect on nearby objects. However, Michell is best known for his
invention, probably in the early 1780's, of the torsion balance, which is the
major device of the apparatus he built to measure the quantity $\rho_{e}$.

But Michell did not complete this project, and his equipment was taken over by
Cavendish, who rebuilt the apparatus with some improvements, which enabled him
to carry out measurements of the density of the earth with very high accuracy.
We refer to his report \cite{Cavend}, see also \cite{Shamos}, \cite{Clot}. The
measurement of the universal constant $G$ had many remarkable consequences,
for example, estimates of the mass of the earth, moon, sun, other planets, and
massive black holes.

\ \ \ \ 

\textbf{(iv) }In 1785 Coulomb published his investigation of the electric
force, using an apparatus involving a torsion balance. But, according to
Cavendish, Michell had described his torsion balance device to him before
1785, so it seems that both Michell and Coulomb must be credited with the
invention of the ingenious torsion balance. Like the gravitation force, the
Coulomb force between charged particles is also of the inverse square type. In
fact, here Newton's \textquotedblleft superb theorem\textquotedblright\ not
only applies, but also plays a useful role in providing the empirical evidence
as well as the measurement of the constant of proportionality, namely the
\textit{electric force constant }(or Coulomb's constant)\textit{ }$k_{e}$.

\end{document}